\newtheorem{theorem}{Теорема}
\newtheorem{remark}{Замечание}
\author{Nikita V. Gaianov, Anastasia V. Parusnikova}
\title{On finding formal power-logarithmic expansions of solutions to $q$-difference equations}
\date{}
\begin{document}
\maketitle

\begin{abstract}
	
	An algebraic $q$-difference equation is considered. A sufficient condition for the existence of a formal power-logarithmic expansion of a solution to such an equation in the neighborhood of zero is proposed. An example of applying this sufficient condition for constructing a formal expansion of a solution to a certain $q$-difference analogue of the fifth Painlev\'{e} equation for specific values of the equation parameters is given; two different values of the number $q$ are considered, leading to qualitatively different formal asymptotic expansions of the solutions of the fifth Painlev\'{e} equation.

\textbf{Keywords:} $q$-difference equations, asymptotic expansions, Newton polygon, power-logarithmic expansion.

\textbf{ MSC classes:} 	34m25, 34m55.

Рассматривается алгебраическое $q$-разностное уравнение. Предлагается достаточное условие существования формального степенно-логарифмического разложения решения такого уравнения в окрестности нуля. Приводится пример применения этого достаточного условия для построения формального разложения решения некоторого $q$-разностного аналога пятого уравнения Пенлеве при конкретных значениях параметров уравнения; рассматриваются два различных значения числа $q$, приводящие к качественно разным формальным асимптотическим разложениям решений.

\textbf{Ключевые слова:} $q$-разностные уравнения, асимптотические разложения, многоугольник Ньютона, степенно-логарифмическое разложение.



    %
\end{abstract}

	\section{Введение}
В настоящее время активно развивается теория $q$-разностных уравнений  и систем~\cite{crystal, vyugin}. 
Хорошо освоены методы нахождения решений в виде формальных степенных 
разложений~\cite{canopoly}, имеются доказательства их сходимости \cite{gontsov}, однако даже для линейных 
дифференциальных уравнений могут встречаться решения, содержащие логарифмические 
слагае\-мые~\cite{adams}. В предыдущей статье авторов \cite{us} было  получено достаточное условие 
существования решения в виде степенно-логарифмического разложения по целым 
неотрицательным степеням независимой переменной (в виде ряда Дюлака). Развивая эти результаты, в данной 
работе мы переносим методы и результаты степенной геометрии \cite{bruno2004, brunogreen} на случай $q$-разностных 
уравнений, формулируем достаточные условия существования формальных решений алгебраического $q$-разностного уравнения в виде 
степенно-логарифмических рядов более общего вида, чем в предыдущей работе, и 
представляем метод их получения. Также приведён пример применения полученной теоремы для построения формального разложения решения некоторого $q$-разностного аналога пятого уравнения Пенлеве при конкретных значениях параметров уравнения.   Рассматриваются два различных значения числа $q$, приводящие к качественно разным формальным асимптотическим разложениям решения аналога пятого уравнения Пенлеве.

\section{Основные определения}
В этой работе 
рассматривается случай $x \to 0$. 

Предположим, что $y$ -- однозначная комплекснозначная функция комплексной переменной $x$, число $q \neq 0$. Определим \textit{оператор $q$-дифференцирования} $\sigma$ формулой:  
$$
(\sigma y)(x)=y(q x),
$$
при этом функция $\sigma y$ называется  \textit{$q$-разностной производной функции} $y$.

Далее также предполагаем, что $q^k \neq 1~\forall k \in \mathbb{N}$.

\textit{Алгебраическим $q$-разностным уравнением $n$-го порядка} называется уравнение:
\begin{equation}
f(x, y, \sigma y, \ldots, \sigma^n y) = 0, \label{eq:q_alg_diff}
\end{equation}
где $f$ -- многочлен $n+2$ переменных.

Перенесем имеющиеся для  дифференциальных уравнений определения степенной геометрии из  работы \cite{bruno2004} для построения аналогичной теории для $q$-разностных уравнений.

Обозначим $X = (x, y)$.
\textit{$q$-разностным мономом} $b(x, y)$ называется произведение монома $c 
x^{r_1} y^{r_2}$,  
где $c = \text{const}$, $r_1, r_2 \in \mathbb{R}$, и конечного числа $q$-разностных 
производных  
$$\sigma^l y, \quad l \in \mathbb{N}.$$
\textit{$q$-разностной суммой} называется сумма $q$-разностных мономов:  
\begin{equation}
f(X) = \sum a_i(X). \label{eq:difsum}
\end{equation}
Каждому $q$-разностному моному $b(X)$ ставится в соответствие векторный показатель 
$Q(b)$ следующим образом: 
$$Q(c x^{r_1} y^{r_2}) = (r_1, r_2);~~~
Q((\sigma^l y)^r) = (0,r);~~~ Q(b_1(X) b_2(X)) = Q(b_1(X)) + Q(b_2(X)).$$
Обозначим как $S(f)$ 
множество векторных показателей $q$-разностной суммы $f(X)$, а через $f_Q(X)$ -- сумму 
всех мономов $b_i$, для которых $Q(b_i) = Q$. Тогда  
$$f(X) = \sum_{Q \in S(f)} f_Q(X).$$  
Множество $S(f)$ называется \textit{носителем} суммы $f(X)$ и уравнения 
\eqref{eq:q_alg_diff}.  

Заметим, что носитель алгебраического $q$-разностного уравнения $(\ref{eq:q_alg_diff})$  лежит в~$\mathbb{Z}_+^2.$

\textit{Многоугольником Ньютона} уравнения \eqref{eq:q_alg_diff} (а также $q$-разностной суммы 
\eqref{eq:difsum}) называется выпуклая оболочка множества $S(f)$, которая обозначается 
$\Gamma(f)$.  
Граница $\partial \Gamma(f)$ многоугольника $\Gamma(f)$ состоит из вершин 
$\Gamma_j^{(0)}$ и рёбер $\Gamma_j^{(1)}$ -- (обобщённых) граней 
$\Gamma_j^{(d)}$, где верхний индекс $d$ указывает размерность грани.  
Каждой обобщённой грани соответствует граничное подмножество  
$$S_j^{(d)} = S(f) \cap \Gamma_j^{(d)},$$
\textit{укороченная сумма}, определяемая как  
\begin{equation*}
\hat{f}_j^{(d)}(X) = \sum_{Q \in S_j^{(d)}} f_Q(X)
\end{equation*}  
и \textit{укороченное уравнение} 
\begin{equation}
\label{tr_eq}
f_j^{(d)}(X) = 0.
\end{equation}

\section{Связь решений исходного и укороченного уравнения}

Если для уравнения \eqref{eq:q_alg_diff} существует решение вида
\begin{equation}
y = c x^r + O(x^{r+\varepsilon}), r\in \mathbb{R}, x \to 0, \varepsilon > 0, c \in \mathbb{C}\setminus\{0\}, 
\label{eq:solution}
\end{equation}
то его \textit{укороченным решением} называется
\begin{equation}
y = c x^r, \label{eq:shortsolution}
\end{equation}
а \textit{нормальным конусом решения} \eqref{eq:solution} называется луч $\lambda (-1, -r)$, где $\lambda > 0$.

\textit{Нормальным конусом} грани $\Gamma_j^{(d)}$ называется множество 
$$
U_j^{(d)}=\left\{\begin{array}{ccc}
P: & \langle P, Q\rangle=\left\langle P, Q^{\prime}\right\rangle, & Q, Q^{\prime} \in {S}_j^{(d)} \\
& \langle P, Q\rangle>\left\langle P, Q^{\prime \prime}\right\rangle, & Q^{\prime \prime} \in {S}(f) 
\backslash {S}_j^{(d)}
\end{array}\right\},
$$
где $\langle P , Q \rangle := p_1 q_1 + p_2 q_2$ -- скалярное произведение векторов $P=(p_1, p_2)$, $Q=(q_1, q_2)$.

\begin{theorem}
Если уравнение \eqref{eq:q_alg_diff} имеет решение \eqref{eq:solution} и нормальный 
конус $U$ решения \eqref{eq:solution} таков, что $U \subset U_j^{(d)}$, то укороченное решение \eqref{eq:shortsolution} является решением 
соответствующего	укороченного уравнения \eqref{tr_eq}.
\end{theorem}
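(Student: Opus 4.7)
The approach I would take follows the standard power-geometry reasoning of \cite{bruno2004}, adapted to $q$-difference monomials: substitute the asymptotic expansion into the equation, compute the leading $x$-exponent of every monomial, and identify which ones carry the leading asymptotic.

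First I would study how the $q$-shift acts on the expansion. If $y(x) = cx^r + O(x^{r+\varepsilon})$ as $x \to 0$, then $\sigma^l y(x) = y(q^l x) = c\, q^{lr}\, x^r + O(x^{r+\varepsilon})$. For an arbitrary $q$-difference monomial $b(X) = a\, x^{r_1} y^{r_2} \prod_j (\sigma^{l_j} y)^{k_j}$, whose vector exponent is $Q(b) = (r_1,\, r_2 + \sum_j k_j)$, multiplying out the expanded factors yields
\begin{equation*}
b\bigl(x,\, cx^r + O(x^{r+\varepsilon})\bigr) = C_b\, x^{\langle (1,r),\, Q(b)\rangle} + O\bigl(x^{\langle (1,r),\, Q(b)\rangle + \varepsilon}\bigr),
\end{equation*}
with $C_b = a\, c^{r_2 + \sum_j k_j} \prod_j q^{l_j r k_j}$. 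The crucial point is that the leading $x$-exponent of every monomial is determined by its vector $Q(b)$ alone.

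Next I would decode the inclusion $U \subset U_j^{(d)}$. Taking $P = (-1,-r)$ in the defining relations of $U_j^{(d)}$ yields a common value $\alpha := \langle (1,r),\, Q\rangle$ for all $Q \in S_j^{(d)}$, together with the strict inequality $\langle (1,r),\, Q''\rangle > \alpha$ for every $Q'' \in S(f) \setminus S_j^{(d)}$. Consequently, under the substitution, monomials supported on $\Gamma_j^{(d)}$ all contribute to the single leading order $x^\alpha$, whereas off-face monomials contribute to strictly higher orders. Substituting the expansion into $f(X) = 0$ and equating the coefficient of $x^\alpha$ to zero then gives
\begin{equation*}
\sum_{Q \in S_j^{(d)}} C_{b,Q}\, x^{\alpha} \;=\; \hat f_j^{(d)}(x, cx^r) \;=\; 0,
\end{equation*}
which is exactly the assertion that $y = cx^r$ solves the truncated equation~\eqref{tr_eq}.

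The principal technical obstacle is the first step: one must carefully verify that the asymptotic for $\sigma^l y$ indeed preserves the remainder exponent $r + \varepsilon$ (the factor $q^{r+\varepsilon}$ only modifies the hidden constant) and that taking products of such expansions respects both the leading exponent and the order of the error term. Once this bookkeeping is settled, the normal-cone inequality automatically singles out $\Gamma_j^{(d)}$ as the unique face carrying the leading asymptotic, and the remainder of the argument is a direct transcription of Bruno's ODE proof to the $q$-difference setting.
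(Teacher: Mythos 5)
Your proposal is correct and follows essentially the same route as the paper's own proof: substitute the expansion, use the normal-cone inclusion with $P=(-1,-r)$ to see that the face monomials share the minimal exponent $\langle (1,r),Q\rangle$ while all other monomials are of strictly higher order, and conclude that the leading coefficient, which is exactly $\hat f_j^{(d)}(x,cx^r)$ up to the factor $x^{\alpha}$, must vanish. The only difference is that you spell out the action of $\sigma^l$ on the expansion and the resulting constants $C_b$ explicitly, a bookkeeping step the paper leaves implicit.
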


\begin{proof}
Подставим решение (\ref{eq:solution}) в уравнение (\ref{eq:q_alg_diff}):
\begin{multline*}
	f(X) = \sum_{Q\in S(f)} f_Q(x, c x^r + O(x^{r+\varepsilon})) =  \sum_{Q \in S_j^{(d)}} f_Q(x, c x^r) + 
	O(x^{r'}) + O(x^{r''}) = 0,
\end{multline*}
где $r' = \min\limits_{Q \in S_j^{(d)}} \langle Q, (1, r+\varepsilon) \rangle$,
$r'' = \min\limits_{ Q \in S(f) \setminus S_j^{(d)}} \langle Q, (1, r) \rangle$. 

Но поскольку $U \subset U_j^{(d)}$, 
то $r'' > \langle Q, (1, r) \rangle$ для всех $Q \in S_j^{(d)}$. Получаем что 
$$
f(X) = \sum_{Q \in S_j^{(d)}} f_Q(x, c x^r)(1 + o(1)), x \to 0,
$$
и для выполнения равенства требуется
$$\sum_{Q \in S_j^{(d)}} f_Q(x, c x^r) = 0,$$
что и представляет собой укороченное уравнение \eqref{tr_eq}.
\end{proof}

\section{Решение укороченных уравнений}

Рассмотрим укороченное уравнение $\hat{f}^{(0)}=0$, соответствующее вершине $\Gamma^{(0)}=(q_1, q_2)$ многоугольника Ньютона.
При подстановке в него $y = c x^r$ и сокращении степеней $x$ и $c$ получаем уравнение
$$c^{-q_2} x^{-q_1-q_2 r} \hat{f}^{(0)}(x, c x^r) = \chi(r) = 0,$$
которое зависит только от $r$ и, вообще говоря, от $q$.  
Многочлен $\chi(r)$ называется \textit{характеристическим многочленом $q$-разностной суммы}
$f^{(0)}(X)$.  
Из его корней необходимо отобрать те, для которых вектор $(-1, -r)$ лежит в нормальном 
конусе $U^{(0)}$.

Рассмотрим укороченное уравнение, соответствующее ребру $\Gamma^{(1)}$, лежащему на прямой $q_1+ r q_2+c=0.$  Для того чтобы 
решение $y = c x^r$ было решением укороченного уравнения $\hat{f}^{(1)}(x, y)=0$, необходимо, чтобы $(-1, -r) \in 
U^{(1)}$, что однозначно определяет значение $r$. Значение $c_r$ находится из 
\textit{определяющего уравнения}:
$$ x^{c} \hat{f}_j^{(1)}(x, c_r x^r) = 0.$$

Итак, каждое укороченное уравнение имеет одно или несколько подходящих решений с $U \subset 
U^{(d)}$. 

\section{Критические числа укороченного уравнения}

Если найдено укороченное решение \eqref{eq:shortsolution}, то замена $y = c x^r + z$ 
приводит уравнение \eqref{eq:q_alg_diff} к виду
\begin{equation}
\tilde{f}(x, z) = f(x, c x^r + z) = 0.
\label{eq:neweq}
\end{equation}

Во многих случаях уравнение \eqref{eq:neweq}, возможно, после сокращения на некоторую 
степень $x$, имеет вид:
\begin{equation}
\label{subs}
\tilde{f}(x, z): = \mathcal{L}(\sigma) z + h(x, z) = 0,
\end{equation}
где $\mathcal{L}(\sigma)$ — линейный $q$-разностный оператор с постоянными 
коэффициентами, т.~е.
\begin{equation}
\label{L-oper}
\mathcal{L}(\sigma) = a_m \sigma^m + \ldots + a_1 \sigma + a_0,
\end{equation}
точка $Q\left(\mathcal{L}(\sigma)z\right) = (0, 1)$ присутствует в носителе уравнения (\ref{subs}) и  является вершиной многоугольника $\Gamma(\tilde{f})$, а носитель $S(h)$  не 
содержит точки $(0, 1)$.

Определим \textit{характеристический многочлен} $q$-разностной суммы $\mathcal{L}(\sigma) z$ по формуле
$$\nu(k) = x^{-k} \mathcal{L}(\sigma)[x^k].$$
Если $\nu(k) \not\equiv 0,$ то корни $k_1, \ldots, k_s$ многочлена $\nu(k)$ называются \textit{собственными числами} 
укороченного решения \eqref{eq:shortsolution}.
Вещественные собственные числа $k\in \{k_1, \ldots, k_s\}$, для которых $k > r$, называются \textit{критическими 
числами}.

Введём дополнительные обозначения. Сдвинем носитель $S(\tilde{f})$ на $(0, 
-1)$ и будем обозначать новое множество $S'(\tilde{f}) = S(\tilde{f}) - (0, 1)$. 
Пусть задано такое число $r$, что для каждой точки $Q' \in S'(\tilde{f})$ скалярное 
произведение $\langle R, Q' \rangle \leq 0$, где $R=(1, r)$. 
Обозначим через $S'_+(\tilde{f})$ множество конечных сумм точек $Q' \in S'(\tilde{f})$ и векторов $(k_1, 
-1), \ldots, (k_s, -1)$, где $k_1, \ldots, k_s$ -- критические числа укороченного решения $y=cx^r$. 
Пусть $K(k_1, \ldots, k_s)$ -- множество таких $q_1\in\mathbb{R}$, что $(q_1, -1) \in 
S'_+(\tilde{f})$.

\section{Степенно-логарифмические разложения решений}

Как показано в статье \cite{us}, не для каждого алгебраического $q$-разностного уравнения все решения являются степенными, т. е. элементами  пространства $\mathbb{C}[[x]] = \left\{\sum\limits_{k=0}^\infty c_k x^k, c_k =\mathrm{const}\in \mathbb{C}\right\}.$ В работе \cite{us} мы ограничивались формальными решениями в виде рядов
$$
\sum_{k=0}^\infty p_k(\log_q x)x^k, 
$$ где $p_k$ -- многочлены с комплексными коэффициентами. 

В данной 
работе рассматриваем формальные ряды несколько более общего вида~(\ref{eq:logsolution})~-- \textit{степенно-логарифмические асимптотические разложения решений}.

Далее приведено доказательство аналогичного имеющемуся в \cite{bruno2004} для дифференциального уравнения достаточного условия существования формального степенно-логарифмического асимптотического разложения решения $q$-разностного алгебраического уравнения.

\begin{theorem}
\label{th}
Рассмотрим уравнение \eqref{eq:q_alg_diff} и его укороченное решение \eqref{eq:shortsolution}. Пусть, сделав замену $y=cx^r+z$  и преобразовав, получили уравнение \eqref{subs} и пусть для носителя уравнения \eqref{subs} выполнены следующие условия:
\begin{enumerate}
	\item[(1)] точка $(0, 1)$ является вершиной $\Gamma(\tilde{f})$; 
	\item[(2)] в уравнении $\tilde{f}(x, z) = 0$ вершине $(0, 1)$ 
	соответствует слагаемое $\mathcal{L}(\sigma) z$ и только оно, где $\mathcal{L}(\sigma)$ -- линейный $q$-разностный оператор, определяемый формулой \eqref{L-oper} ($\mathcal{L}(\sigma)$ -- многочлен от $\sigma$, его коэффициенты являются постоянными).
\end{enumerate}

Тогда	 уравнение \eqref{subs} имеет 
формальное решение вида
\begin{equation}
	z = \sum_k \beta_k(\log_q x) x^k, \quad k \in K(k_1, \ldots, k_s), k> 
	r,\label{eq:logsolution}
\end{equation}
где $\beta_k$ — многочлены от переменной $\log_q x$, а $k_1, \ldots, k_s$ — критические 
числа укороченного решения \eqref{eq:shortsolution}.
\end{theorem}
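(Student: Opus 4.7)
The plan is to substitute the ansatz $z = \sum_k \beta_k(\log_q x)\, x^k$ into equation \eqref{subs}, expand, and equate coefficients of powers of $x$ to obtain a recursion for the polynomials $\beta_k$. The core computation is the action of $\mathcal{L}(\sigma)$ on log-power monomials: setting $u = \log_q x$, the identity $\sigma[u] = 1 + u$ combined with $\sigma^i[x^k] = q^{ik} x^k$ yields $\sigma^i[u^m x^k] = (u+i)^m q^{ik} x^k$, whence
$$
\mathcal{L}(\sigma)\bigl[\beta(u)\, x^k\bigr] = \tilde{\mathcal{L}}_k[\beta](u)\cdot x^k, \qquad \tilde{\mathcal{L}}_k[\beta](u) := \sum_i a_i\, q^{ik}\, \beta(u+i).
$$
The linear operator $\tilde{\mathcal{L}}_k \colon \mathbb{C}[u] \to \mathbb{C}[u]$ sends a polynomial of degree $m$ to one of degree $m - \mu_k$, where $\mu_k$ is the multiplicity of $k$ as a root of the characteristic polynomial $\nu$ (with $\mu_k = 0$ if $\nu(k) \neq 0$), and its kernel on the degree-$m$ subspace has dimension exactly $\mu_k$.

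Next I would enumerate $K(k_1,\ldots,k_s) \cap (r,\infty)$ in increasing order $r < k^{(1)} < k^{(2)} < \cdots$ and, at stage $n$, equate the coefficient of $x^{k^{(n)}}$ in $\mathcal{L}(\sigma)z + h(x,z) = 0$. The $\mathcal{L}$-part contributes $\tilde{\mathcal{L}}_{k^{(n)}}[\beta_{k^{(n)}}](u)$; each monomial of $h$, corresponding to some $(q_1, q_2) \in S(\tilde{f}) \setminus \{(0,1)\}$, produces after substitution a sum of terms proportional to $x^{q_1 + k_{j_1} + \cdots + k_{j_{q_2}}} \prod_\ell \beta_{k_{j_\ell}}(u + \cdot)$ with each $k_{j_\ell} \in K \cap (r,\infty)$. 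The closure of $K$ under the additions $(q_1, q_2 - 1) + \sum_\ell (k_{j_\ell}, -1)$, which is immediate from the definition of $S'_+(\tilde{f})$, guarantees that the resulting exponents again lie in $K$, and collecting yields an equation
$$
\tilde{\mathcal{L}}_{k^{(n)}}[\beta_{k^{(n)}}](u) = F_{k^{(n)}}\bigl(u;\, \{\beta_{k^{(j)}}\}_{j<n}\bigr).
$$

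Solving each such equation is then direct: if $k^{(n)}$ is not a critical number, $\tilde{\mathcal{L}}_{k^{(n)}}$ is a bijection between polynomial spaces of degree $\deg F_{k^{(n)}}$ and $\beta_{k^{(n)}}$ is uniquely determined; if $k^{(n)}$ is critical of multiplicity $\mu$, one seeks $\beta_{k^{(n)}}$ of degree $\deg F_{k^{(n)}} + \mu$, and the $\mu$-dimensional kernel of $\tilde{\mathcal{L}}_{k^{(n)}}$ furnishes $\mu$ free parameters that remain as arbitrary constants in the final series. Iterating produces the formal expansion \eqref{eq:logsolution}.

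The main technical obstacle is to verify that the recursion is genuinely well-founded: one must check that $F_{k^{(n)}}$ depends only on $\beta_{k^{(j)}}$ with $j < n$, and that the polynomial degrees $\deg \beta_k$ remain uniformly bounded at each stage. Both properties should follow from the geometric hypothesis $\langle R, Q' \rangle \leq 0$ for $R = (1, r)$ and every $Q' \in S'(\tilde{f})$, together with the requirement $k_j > r$ on the critical-number shifts: these inequalities force every substitution step to strictly increase the $x$-exponent, so that within each multi-index the indices $k_{j_\ell}$ are strictly less than $k^{(n)}$, while the number of critical-number shifts $(k_j, -1)$ needed to reach a given element of $K$ is finite and controls the allowable degree of $\beta_k$ in $u$.
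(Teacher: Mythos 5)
Your proposal follows essentially the same route as the paper's proof: substitute the ansatz, use the closure of $K(k_1,\ldots,k_s)$ under the additions coming from $S'(\tilde f)$ to see that all exponents produced by $h(x,z)$ again lie in $K$, and reduce to the family of difference equations $\mathcal{L}(q^kT)\beta_k(t)=-\theta_k(t)$ with $\theta_k$ depending only on earlier $\beta_\ell$. You go slightly further than the paper in one useful respect: the paper's proof stops at deriving this recursion, whereas you also verify its polynomial solvability (the operator $\beta\mapsto\sum_i a_i q^{ik}\beta(u+i)$ lowers degree by exactly the multiplicity $\mu_k$ and has a $\mu_k$-dimensional kernel), which is the step the paper leaves implicit.
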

\begin{proof}
Представим уравнение \eqref{subs} в виде
$$\mathcal{L}(\sigma) z = - h(x, z).$$
Подставим формальное решение \eqref{eq:logsolution} в уравнение \eqref{subs}. После подстановки в левой части   
содержатся слагаемые со степенями $x^k$, где $k \in K(k_1, \ldots, k_s)$, $k>r$. В правой части будут 
слагаемые со степенями вида $x^{\langle Q, (1, k) \rangle}$, где $Q = (q_1, q_2) \in S(\tilde{f})$. 

Но число $\langle Q, (1, k) \rangle \in K(k_1, \ldots, k_s)$, поскольку $\langle Q, (1, k) \rangle = 
q_1 + k q_2$. В множестве $K(k_1, \ldots, k_s)$ содержится абсцисса вершины $Q' + q_2 (k, -1) = (q_1, q_2 - 1) + q_2 (k, 
-1) = (q_1 + k q_2, -1)$ (тут мы пользуемся тем, что $q_2\in \mathbb{Z}_+$, т.~е. точку $(k, -1)$ складываем с собой конечное число раз). 

Следовательно, приравнивая степени при одинаковых степенях $x^k$, получаем семейство 
разностных уравнений на $\beta_k$, при этом здесь и далее $t=\log_q x$:
\begin{equation}
	\label{theta}	
	\mathcal{L}(q^k T) \beta_k(t) +\theta_k(t) = 0,
\end{equation}
где $\theta_k$ — многочлен от функций $\beta_\ell,$ где $\ell \in K(k_1, \ldots, k_s)$, а также $\ell < k$; 
оператор сдвига $T$ определяется по формуле $(Tf)(t)= f(t+1).$
\end{proof}
\begin{remark}
В теореме $\ref{th}$ получены уравнения \eqref{theta}, аналогичные имеющимся в \cite{us} уравнениям, но теперь индексы $k$ не только  целые неотрицательные.
\end{remark}

Говорят, что для критического числа $k$ \textit{выполнено условие совместности}, если в уравнении~(\ref{theta}) функция $\theta_k(t)\equiv 0$.

\begin{remark}
В условиях теоремы $\ref{th}$, пусть для всех критических чисел $k$ выполнено условие совместности, а также все критические числа $k$ являются некратными. Тогда формальное решение \eqref{eq:logsolution} уравнения \eqref{eq:neweq} не содержит логарифмов.
\end{remark}

\begin{proof}

Функции $\beta_k$ получаются как решения разностных уравнений (\ref{theta}).

Если $k$ является некратным критическим, и для него выполнено условие совместности, то уравнение на $\beta_k$ имеет вид
$$\mathcal{L}(q^k T)\beta_k(t) = 0,$$
тогда решение $\beta_k(t)=C_k,$ $C_k \in \mathbb{C}$, -- произвольная константа.

Пусть $k$ не является критическим. Функция $\theta_k(t)=A_k$ в уравнении (\ref{theta}) -- постоянна, уравнение на $\beta_k$ имеет вид
$$\mathcal{L}(q^k T)\beta_k(t) =  -A_k=\mathrm{const},$$  
тогда его решение $\beta_k(t)=-A_k/\mathcal{L}(q^k)$ -- однозначно определенная постоянная. 
\end{proof}

\section{Степени логарифмов в разложении}

Посмотрим, как растут степени логарифмов в разложении \eqref{eq:logsolution}.

Обозначим через $\mu(j)$ кратность $q^j$ как корня $\mathcal{L}(s)$. 

\begin{theorem}
Пусть выполнены условия теоремы $\ref{th}$. 
Тогда	для степеней многочленов $\beta_k$ в решении \eqref{eq:logsolution} выполняются оценки
\begin{equation}
	\label{ineq}
	\deg \beta_k \leq C (k - r) \sum_{r<j\leq k}\mu(j)
\end{equation}
при $C=1+\max\limits_{\ell\in K(k_1, \ldots, k_s)\cap \{\ell>r\}} \frac{1}{\ell-r}$.
\end{theorem}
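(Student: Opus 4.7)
The plan is to argue by induction on $k \in K(k_1,\ldots,k_s)\cap(r,+\infty)$ taken in increasing order, using at each step the recursion $\mathcal{L}(q^k T)\beta_k(t) = -\theta_k(t)$ produced in the proof of Theorem \ref{th}. First I would analyse how the operator $\mathcal{L}(q^k T)$ changes polynomial degrees: factoring $\mathcal{L}(s) = a_m\prod_i(s - s_i)^{\nu_i}$ gives $\mathcal{L}(q^k T) = a_m\prod_i(q^k T - s_i)^{\nu_i}$, where each factor with $s_i \neq q^k$ is an invertible degree-preserving operator on polynomials in $t$ (its action on the leading coefficient is multiplication by $q^k - s_i \neq 0$), while the factor corresponding to $s_i = q^k$, entering with multiplicity $\mu(k)$, is $q^k(T - 1)$, a forward-difference operator that lowers degree by one. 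Solving $\mathcal{L}(q^k T)\beta_k = -\theta_k$ in polynomials therefore produces $\deg\beta_k \leq \deg\theta_k + \mu(k)$.

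Next I would bound $\deg\theta_k$ via the polygon hypothesis. Each $q$-difference monomial $cx^{q_1}\prod_l(\sigma^l z)^{n_l}$ in $h(x,z)$ with support point $Q = (q_1,q_2) \in S(h)$ and $q_2 = \sum_l n_l$, after substitution of the formal series, contributes to the coefficient of $x^k$ a sum over decompositions $\ell_1 + \ldots + \ell_{q_2} = k - q_1$, with $\ell_i \in K\cap(r,+\infty)$, of products of $q_2$ translates of the functions $\beta_{\ell_i}(t)$. The vertex hypothesis (that $(0,1)$ is a vertex of $\Gamma(\tilde f)$ with the ray through $(-1,-r)$ in its normal cone) yields the strict inequality $q_1 + rq_2 > r$ for every $Q \in S(h)$, and combined with $\ell_i > r$ this forces each $\ell_i < k$ strictly, so the induction hypothesis applies. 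Using $\deg\beta_{\ell_i} \leq C(\ell_i - r)M(\ell_i)$, the monotonicity bound $M(\ell_i) \leq M(k) - \mu(k)$ (which holds for $\ell_i < k$ because $M(k) - M(\ell_i) \geq \mu(k)$), and the identity $\sum_i(\ell_i - r) = k - q_1 - q_2 r$, the same polygon inequality gives $\sum_i\deg\beta_{\ell_i} \leq C(M(k) - \mu(k))(k - q_1 - q_2 r) < C(M(k) - \mu(k))(k - r)$, hence $\deg\theta_k \leq C(M(k) - \mu(k))(k - r)$.

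Combining the two estimates,
$$\deg\beta_k \leq \deg\theta_k + \mu(k) \leq C(M(k)-\mu(k))(k-r) + \mu(k) = CM(k)(k-r) - \mu(k)\bigl(C(k-r) - 1\bigr),$$
so the target bound $\deg\beta_k \leq CM(k)(k-r)$ reduces to $C(k-r) \geq 1$. The definition $C = 1 + \max_{\ell \in K\cap(r,+\infty)}1/(\ell-r)$ is calibrated precisely for this: taking $\ell = k$ in the maximum gives $C \geq 1 + 1/(k-r)$, whence $C(k-r) \geq (k-r)+1 > 1$. The main obstacle is to see that the two strict inequalities — the polygon strict $q_1+rq_2 > r$ and the forced strict $\ell_i < k$ — must be used simultaneously so that the additive $\mu(k)$ from the kernel of the forward-difference part of $\mathcal{L}(q^k T)$ is absorbed by the $+1$ built into $C$; the base case (where $k$ is minimal in $K\cap(r,+\infty)$ so $\theta_k$ is at most a constant arising from a $q_2 = 0$ monomial with $q_1 = k$) is then immediate because $\deg\beta_k \leq \mu(k) \leq M(k) \leq CM(k)(k-r)$.
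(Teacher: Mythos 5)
Your argument follows essentially the same route as the paper's: induction on $k$ in increasing order through $K(k_1,\ldots,k_s)$, the operator estimate $\deg\beta_k\le\deg\theta_k+\mu(k)$, a bound on $\deg\theta_k$ obtained from the induction hypothesis together with the weight constraint coming from the support, and absorption of the extra $\mu(k)$ via $C(k-r)\ge 1$; you in fact give more detail than the paper on why solving $\mathcal{L}(q^kT)\beta_k=-\theta_k$ raises the degree by at most $\mu(k)$. One local inaccuracy: the vertex hypothesis does not yield the strict inequality $q_1+rq_2>r$ for every $Q\in S(h)$ --- it fails for points $(0,q_2)$ with $q_2\ge 2$ when $r=0$, as in the paper's own example --- but only the non-strict version is needed for your estimate of $\deg\theta_k$, and the strictness $\ell_i<k$ required for the induction follows instead from having $q_2\ge 2$ summands each exceeding $r$ (and from $q_1>0$ when $q_2=1$, which is guaranteed by condition (2)). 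With that small repair the proof is sound and substantively identical to the paper's.
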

\begin{proof}
Отдельно отметим, что здесь считаем степень нулевого многочлена равной нулю.

Докажем по индукции. Рассмотрим $k^{(0)} = \min \left(K(k_1, \ldots, k_s)\cap \{k>r\}\right)$ -- минимальную 
степень в разложении \eqref{eq:logsolution}. 
Тогда 
$\deg \beta_{k^{(0)}} \leq \mu(k^{(0)})$, и, поскольку $C>\frac{1}{k^{(0)} - r}$, неравенство (\ref{ineq})
выполнено.

Предположим, что неравенство (\ref{ineq}) верно для всех $\beta_j$  при $j < k$. Для $\beta_k$ имеем 
уравнение (\ref{theta}), в котором $\theta_k$ -- линейная комбинация мономов вида
\begin{multline}
	\label{monom}
	\beta_{i_{0,0}}^{\alpha_{0,0}}(t) \ldots  \beta_{i_{0,N_0}}^{\alpha_{0,N_0}}(t) \, 
	\beta_{i_{1, 0}}^{\alpha_{1,0}}(t+1) \ldots  \beta_{i_{1,N_1}}^{\alpha_{1,N_1}}(t+1)  \ldots       
	\beta_{i_{n,0}}^{\alpha_{n,0}}(t+n) \ldots  \beta_{i_{n,N_n}}^{\alpha_{n,N_n}}(t+n), 
\end{multline}
где $\alpha_{0,0}i_{0,0} + \ldots+ \alpha_{n,N_n}i_{n, N_n} \leq k$.
Из последнего неравенства также следует, что 
$$\alpha_{0,0}(i_{0,0}-r) + \ldots + \alpha_{n,N_n}(i_{n,N_n}-r) \leq k - r.$$

Степень каждого монома (\ref{monom}) по предположению индукции не превышает
\begin{multline*}
	\alpha_{0,0}\deg \beta_{i_{0,0}} + \ldots + \alpha_{n, N_n}\deg \beta_{i_{n, N_n}} \leq \\ \leq C\left(\alpha_{0,0} 
	(i_{0,0}-r)\sum_{r<j\leq i_{0,0}}\mu(j) + 
	\ldots + \alpha_{n, N_n}(i_{n, N_n}-r)\sum_{r<j\leq i_{n,Nn}}\mu(j)\right)  \leq \\ \leq C(k-r) 
	\sum_{r<j<k}\mu(j).
\end{multline*}
Тогда
$$
\deg \beta_k \leq \deg\theta_k + \mu(k) 
\leq C(k-r) 
\sum_{r<j<k}\mu(j) 
+ \mu(k) \leq C(k-r) \sum_{r<j\leq k}\mu(j).
$$
Последнее неравенство верно в силу того, что $C(k-r) > 1.$
\end{proof}

\section{Пример степенно-логарифмического разложения.}

Рассмотрим пятое уравнение Пенлеве:	
$$
\frac{d^2 y}{dx^2} =
\left( \frac{1}{2y} + \frac{1}{y-1} \right) \left( \frac{dy}{dx} \right)^2
- \frac{1}{x} \frac{dy}{dt}
+ \frac{(y-1)^2}{x^2} \left( a_1 y + \frac{a_2}{y} \right)
+ a_3 \frac{y}{x}
+ a_4 \frac{y(y+1)}{y-1}
,$$
где $a_1, a_2, a_3, a_4$ -- комплексные параметры. Положив $a_1=a_2=0, a_3, a_4 \neq 0$ и перейдя к оператору $\delta = x\frac{d}{dx}$, получим 
уравнение
$$
\frac{\delta^2 y - \delta y}{x^2} =
\left( \frac{1}{2y} + \frac{1}{y-1} \right) \frac{(\delta y)^2}{x^2}
- \frac{1}{x^2} \delta y
+ a_3 \frac{y}{x}
+ a_4 \frac{y(y+1)}{y-1}
.$$
Формально заменим в этом уравнении оператор $\delta$ на $\sigma$ и получим некоторое $q$-разностное  
уравнение
$$
\frac{\sigma^2 y - \sigma y}{x^2} =
\left( \frac{1}{2y} + \frac{1}{y-1} \right) \frac{(\sigma y)^2}{x^2}
- \frac{1}{x^2} \sigma y
+ a_3 \frac{y}{x}
+ a_4 \frac{y(y+1)}{y-1}.
$$
Упростим его, домножив на $x^2 y(y-1)$, получим уравнение
\begin{equation}
-a_3xy^3+a_3xy^2-a_4x^2y^3-a_4x^2y^2+(\sigma^2y)y^2-\frac{3(\sigma y)^2y}{2}-(\sigma^2 
y)y+\frac{(\sigma y)^2}{2} = 0. \label{eq:qP5}
\end{equation}
Многоугольник Ньютона $\Gamma$ уравнения (\ref{eq:qP5}) изображен на Рис. 1.

\begin{figure}[h]
\centering
\includegraphics{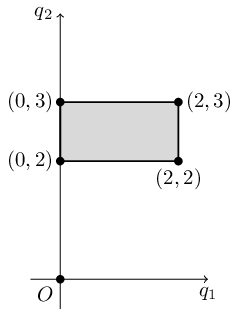}
\caption{Многоугольник Ньютона уравнения \eqref{eq:qP5}.}
\end{figure}

Нас интересует левое вертикальное ребро многоугольника $\Gamma$. Ему соответствует определяющее уравнение
$$
\frac{c^2}{2}-\frac{3c^3}{2}-c^2+c^3 = 0,$$
единственное ненулевое решение которого есть $c=-1.$
Произведем замену $y = z -1$ в уравнении \eqref{eq:qP5} и перейдем к уравнению
\begin{multline}
\label{eq:qP5-new}
-a_3xz^3+4a_3xz^2-5a_3xz+2a_3x-a_4x^2z^3+2a_4x^2z^2-a_4x^2z+(\sigma^2 
z)z^2-z^2-\\-\frac{3}{2}(\sigma z)^2z+3(\sigma z)z-3(\sigma^2 z)z+\frac{3z}{2}+2(\sigma 
z)^2-4(\sigma z)+2(\sigma^2 z) = 0, 
\end{multline}
многоугольник Ньютона которого изображен на Рис. 2.
\begin{figure}[h]
\centering
\includegraphics{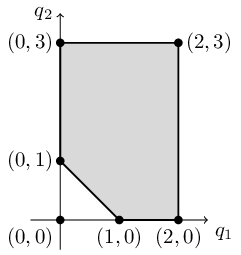}
\caption{Многоугольник Ньютона уравнения \eqref{eq:qP5-new}.}
\end{figure}

Линейная часть уравнения (\ref{eq:qP5-new}) имеет вид
$\mathcal{L}(\sigma)z = 2\sigma^2 z - 4 \sigma z + 3z/2,$ откуда 
характеристическое уравнение $$\nu(k) = 2 q^{2k} - 4 q^k + 3/2 = 0,$$ корни $k$ которого определяются из уравнения $q^k = 1/2$  или  
$	q^k=3/2.$

Продолжим укороченное решение $y=-1$ до формального разложения решения уравнения \eqref{eq:qP5}. 

Рассмотрим несколько значений параметра $q$.

\textbf{Случай 1.} Пусть сначала $q = 1/2,$ тогда собственные  числа $k_1 = 1, k_2 = 1-\log_{2} 3$. Только $k_1$ является критическим числом ($k_2<0$).

Поскольку координаты всех точек носителя уравнения (\ref{eq:qP5-new}) целые 
неотрицательные, критическое число натуральное, а в носителе нет точки $(0, 0)$, то и множество $K(
k_1) \subset \mathbb{N}$. C другой стороны, в множестве $S'_+(\tilde{f})$  есть точки $(0, 1)$ и $(1, -1)$, 
их 
конечные суммы покрывают все числа вида $(n, -1), n\in\mathbb{N}$, значит, $K(
k_1) = \mathbb{N}$.
Итак, уравнение \eqref{eq:qP5-new} имеет степенно-логарифмическое 
решение с носителем, лежащим в $\mathbb{N}$.

Перейдём к нахождению решения.
Второе слагаемое в разложении удовлетворяет уравнению
$$2\sigma^2z - 4\sigma z + \frac{3}{2}z + 2 a_3 x = 0.$$
Подставляя в уравнение выше решение в виде $z = \beta_1(\log_q x) x,\, \beta_1\in\mathbb{C}[\log_q x]$, переходя к переменной $t=\log_q x$, получаем разностное 
уравнение на $\beta_1$:
$$\frac{1}{2}\beta_1(t+2) - 2 \beta_1(t+1) + \frac{3}{2}\beta_1(t) + 2a_3 = 0,$$ все решения которого в виде многочлена записываются в виде
$\beta_1(t) = 2 a_3 t + C,$ $C\in\mathbb{C}.$

Итого, получаем начальный отрезок формального разложения решения уравнения \eqref{eq:qP5} при 
$q=1/2:$
$$y(x) = -1 + (C - 2a_3\log_2 x)  \, x + \ldots, \quad C \in \mathbb{C}.$$

\textbf{Случай 2.} Пусть теперь $q=1/4$, тогда собственные  числа: $k_1 = 1/2, k_2 = -\log_4 (3/2)$: только $k_1$ критическое. 

Поскольку координаты всех точек носителя уравнения (\ref{eq:qP5-new}) целые 
неотрицательные, критическое число положительное полуцелое, а в носителе нет точки $(0, 0)$, то и 
множество $K(
k_1) \subset \mathbb{N}/2$. C другой стороны, в множестве $S'_+(\tilde{f})$  есть точки $(0, 1)$ и $(1/2, 
-1)$, их 
конечные суммы покрывают все числа вида $(n/2, -1), n\in\mathbb{N}$, значит, $K(
k_1) = \mathbb{N}/2$.

Найдем первый член разложения решения. Уравнение на $\beta_{1/2}(t)$ имеет вид 
$$\frac{1}{2}\beta_{1/2}(t+2)-2\beta_{1/2}(t+1)+\frac{3}{2}\beta_{1/2}(t)=0.$$
Для этого уравнения выполнено условие совместности. Все 
полиномиальные решения такого разностного уравнения являются константами: $\beta_{1/2}(t) 
=C, C\in \mathbb{C}$ -- произвольная постоянная. Найдем следующий член разложения:
разностное уравнение на $\beta_1$ имеет вид
$$\frac{1}{8}\beta_1(t+2) - \beta_1(t+1) + \frac{3}{2} \beta_1(t) = -2a_3-C^2/4.$$
Единственным полиномиальным решением данного разностного уравнения является $\beta_1(t) = 
-\frac{2}{5}(8a_3+C^2)$.

Получаем следующий начальный отрезок  разложения решения уравнения \eqref{eq:qP5}:
$$y(x) = -1 + C \sqrt{x} + -\frac{2}{5}(8a_3+C^2)x + \ldots$$

Итак, мы проиллюстрировали то, что в зависимости от значения параметра~$q$  уравнения степенное  укороченное решение алгебраического $q$-разностного уравнения (\ref{eq:qP5}) может быть продолжено как степенно-логарифмическое разложение  вида \eqref{eq:logsolution} при $q=1/2$ или степенное  разложение -- разложение в формальный ряд Пюизо -- при $q=1/4$.
\vskip 6cm

\textbf{Благодарности.} Статья подготовлена в ходе работы в рамках Программы фундаментальных исследований Национального исследовательского университета "Высшая школа экономики" (НИУ ВШЭ).
	
\vskip 1 cm
\textbf{Affiliations.}\\
HSE University,\\
Tallinskaya 34, Moscow, 123458, Russia\\
e-mails: parus-a@mail.ru, gajanovnv@gmail.com

\end{document}